\documentclass[12pt]{amsart}

\newcommand{\C}{\mathbb{C}} %% complex numbers

\numberwithin{equation}{section}

\newtheorem{theorem}{Theorem}[section]

\newtheorem{lemma}[theorem]{Lemma}

\newtheorem{question}[theorem]{Question}

\author[Gauthier, Knese]{P. M. Gauthier and Greg Knese}

\address{D\'epartement de math\'ematiques et de statistique, Universit\'e de Montr\'eal,
CP-6128 Centreville, Montr\'eal,  H3C3J7, CANADA}
\email{gauthier@dms.umontreal.ca}

\address{Department of Mathematics, University of Alabama, Box 870350
Tuscaloosa, AL 35487-0350}
\email{geknese@bama.ua.edu}

\keywords{Mergelyan, Riemann hypothesis} \subjclass{Primary: 30E10 ; Secondary: 11Mxx}

\date{\today}

\begin{document}
\begin{abstract}
Sur un compact du plan dont le compl\'ementaire est connexe, est-il
possible d'approcher uniform\'ement une fonction continue, holomorphe
et sans z\'eros \`a l'int\'erieur, par des polyn\^omes n'ayant aucun
z\'eros sur le compact tout entier? Dans cette note br\`eve, nous
rappelons le raport surprenant entre ce probl\`eme et l'hypoth\`ese de
Riemann et donnons une r\'eponse affirmative pour une ``chaine" de
domaines de Jordan.

On a compact subset of the plane with connected complement, is it
possible to uniformly approximate a continuous function, holomorphic and
non-vanishing on the interior, with polynomials non-vanishing on the
entire compact set? In this brief note, we recall the surprising
connection between this question and the Riemann hypothesis and
proceed to provide an affirmative answer for a ``chain'' of Jordan
domains.
\end{abstract}

\title[Zero-free approximation]{Zero-free polynomial approximation on a chain of Jordan domains}

\maketitle
\section{introduction}

For a compact set $K\subset\C,$ we denote by $A(K)$ the family of
continuous functions on $K,$ which are holomorphic on the interior
$K^o$ of $K.$ Mergelyan's theorem asserts that every $f\in A(K)$ is
uniformly approximable by polynomials if and only if $\C\setminus K$
is connected.

\begin{question}\label{polynomial-question}
Let $K$ be compact subset of $\C$ with connected complement. Suppose
$f\in A(K)$ has no zeros on $K^o$ and $\epsilon>0.$ Is there a
polynomial $p_\epsilon$ with no zeros on $K$ such that $\max_{z\in
  K}|f(z)-p_\epsilon(z)|<\epsilon$?
\end{question}

An affirmative answer has been given when $K$ is strictly starlike
\cite{pG10}, a closed Jordan domain \cite{jA11}, or a disjoint union
of such compacta \cite{pG12}.  In this note, we investigate the case
when $K$ is a union of finitely many Jordan domains, not necessarily
disjoint.  Question \ref{polynomial-question} is related to the
following question regarding approximation by vertical translates of
the Riemann zeta-function.

\begin{question}\label{zeta-question}
Let $K$ be a compact subset of the strip $1/2<\Re(z)<1$ with connected
complement. Suppose $f\in A(K)$ has no zeros on $K^o$ and
$\epsilon>0.$ Is the set of $t>0$, such that $\max_{x\in K}
|f(z)-\zeta(z+it)|<\epsilon$, of positive lower density?
\end{question}

Recently, Johan Andersson has made the remarkable observation
\cite{jA11} that these two problems are equivalent.  Under the
stronger hypothesis that $f$ has no zeros on $K$ (rather than on
$K^o$), the answer to Question \ref{polynomial-question} is positive,
as an obvious consequence of Mergelyan's Theorem.  Under this stronger
hypothesis, Question \ref{zeta-question} also has a positive answer,
however this is far from obvious. It is a consequence of Voronin's
spectacular universality theorem for the Riemann zeta-function, which
has been refined by Bhaskar Bagchi \cite{bB82} and Steven Mark Gonek
\cite{sG79}.

For a measurable set $E$ of positive numbers, we denote by $m(E)$ the
measure of $E$ and by $\underline d(E)$ and $\overline d(E)$
respectively the lower and upper densities of $E$
$$ \underline d(E)=\liminf_{T\rightarrow\infty}\frac{m(E\cap[0,T])}{T}
\qquad \overline
d(E)=\limsup_{T\rightarrow\infty}\frac{m(E\cap[0,T])}{T}.
$$

The following result of Bagchi suggests that these problems may be
related to the Riemann Hypothesis and therefore might be difficult to
solve in complete generality.

\begin{theorem}[Bagchi] The following assertions are equivalent.

1) The Riemann hypothesis is true.

2)For each compact set $K$ with connected complement lying in the
strip $1/2 < Re(z) < 1$ and for each $\epsilon > 0$,
$$ \overline d\left(\{t>0:\max_{z\in
  K}|\zeta(z+it)-\zeta(z)|<\epsilon\}\right)>0.
$$

3) For each compact set $K$ with connected complement lying in the
strip $1/2 < Re(z) < 1$ and for each $\epsilon > 0$,
$$ \underline d\left(\{t>0:\max_{z\in
  K}|\zeta(z+it)-\zeta(z)|<\epsilon\}\right)>0.
$$
\end{theorem}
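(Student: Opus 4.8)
The plan is to prove the cycle of implications $(3)\Rightarrow(2)\Rightarrow(1)\Rightarrow(3)$. The first implication is free: since $\underline d(E)\le\overline d(E)$ for every measurable $E\subset(0,\infty)$, statement $(3)$ contains statement $(2)$.

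For $(2)\Rightarrow(1)$ I would argue contrapositively, showing that a violation of the Riemann hypothesis forces $(2)$ to fail for a single, explicitly chosen disk. Suppose $\zeta$ has a zero $\rho_0=\beta_0+i\gamma_0$ with $1/2<\beta_0<1$, and fix $r$ with $0<r<\beta_0-\tfrac12$ so small that the closed disk $K=\{z:|z-\rho_0|\le r\}$ lies in the strip and meets the zero set of $\zeta$ only at $\rho_0$. Put $\mu=\min_{|z-\rho_0|=r}|\zeta(z)|>0$ and take $\epsilon<\mu$. For any $t$ in the approach set $E=\{t>0:\max_{z\in K}|\zeta(z+it)-\zeta(z)|<\epsilon\}$, Rouch\'e's theorem applied on the circle $|z-\rho_0|=r$, where $|\zeta(z+it)-\zeta(z)|<\epsilon<\mu\le|\zeta(z)|$, shows that $\zeta(\cdot+it)$ has a zero inside $K$; equivalently $\zeta$ has a zero $s_t$ with $\Re s_t>\beta_0-r>\tfrac12$ and $\gamma_0+t-r<\Im s_t<\gamma_0+t+r$. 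Since a given zero $s$ can equal $s_t$ only for $t$ in an interval of length $2r$, the set $E\cap[0,T]$ is covered by the intervals attached to zeros of $\zeta$ with $\Re>\tfrac12+\eta$, where $\eta=\beta_0-r-\tfrac12>0$, and imaginary part below $T+\gamma_0+r$. Hence $m(E\cap[0,T])\le 2r\,N(\tfrac12+\eta,\,T+\gamma_0+r)$, and the classical Bohr--Landau zero-density estimate $N(\sigma,T)=o(T)$ for fixed $\sigma>\tfrac12$ gives $m(E\cap[0,T])=o(T)$, so $\overline d(E)=0$ and $(2)$ fails.

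The substantial direction is $(1)\Rightarrow(3)$, which I would route through the probabilistic model of $\zeta$. On the infinite torus $\Omega=\prod_p\{|\omega_p|=1\}$ equipped with Haar measure, the random Euler product $\zeta(s,\omega)=\prod_p(1-\omega_p p^{-s})^{-1}$ converges almost surely and defines a random holomorphic function on $\Re(s)>\tfrac12$, and the translation flow is ergodic. The first step is Bagchi's functional limit theorem: viewing $t\mapsto\zeta(\cdot+it)$ as a random element of the space of holomorphic functions on a neighborhood of $K$ with the topology of uniform convergence on compacta, its distribution converges weakly to the law $P$ of $\zeta(\cdot,\omega)$. The second step is the support computation, which is where the Riemann hypothesis enters: under RH the support of $P$ is exactly the family of zero-free (or identically zero) holomorphic functions on the region, so the open set $A=\{g:\max_{z\in K}|g(z)-\zeta(z)|<\epsilon\}$ satisfies $P(A)>0$ because $\zeta$ itself is zero-free there. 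The final step is the portmanteau inequality for open sets: weak convergence yields $\liminf_{T\to\infty}\tfrac1T\,m\{t\in[0,T]:\zeta(\cdot+it)\in A\}\ge P(A)>0$, and the set on the left is precisely the one appearing in $(3)$, whose lower density is therefore positive.

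I expect the main obstacle to be the $(1)\Rightarrow(3)$ direction, and within it the support computation. Establishing that $\zeta$ lies in the support of $P$ amounts to approximating a prescribed zero-free analytic function by truncated Euler products, which requires a Hilbert-space rearrangement theorem of Pechersky type (a functional generalization of Riemann's rearrangement theorem) applied to the series $\sum_p\log(1-\omega_p p^{-s})^{-1}$; this is precisely the mechanism underlying Voronin's universality theorem, and it is what makes RH both necessary, so that the target $\zeta$ is zero-free, and available, so that $\zeta$ lands in the support. The functional limit theorem itself, in the Bohr--Jessen tradition, is technical but standard; the delicate point is controlling the tails of the Dirichlet series uniformly enough to upgrade convergence of finite-dimensional distributions to weak convergence in the function space.
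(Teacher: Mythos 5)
The paper does not actually prove this theorem: it is quoted as a known result of Bagchi, with the reader referred to \cite{bB82} and to the discussion in \cite{pG12}. So there is no in-paper argument to compare against; what you have written is a blind reconstruction of Bagchi's original proof, and as an outline it is essentially correct. The cycle $(3)\Rightarrow(2)\Rightarrow(1)\Rightarrow(3)$ is the right architecture. The implication $(3)\Rightarrow(2)$ is indeed immediate from $\underline d\le\overline d$. Your contrapositive for $(2)\Rightarrow(1)$ is the standard Rouch\'e-plus-zero-density argument and is sound: a hypothetical zero $\rho_0$ off the critical line, a small disk isolating it, Rouch\'e forcing $\zeta(\cdot+it)$ to vanish in that disk for every $t$ in the approach set, and the Bohr--Landau estimate $N(\sigma,T)=o(T)$ for $\sigma>1/2$ bounding the measure of that set. (Two small points you should make explicit: you also need $r<1-\beta_0$ so that $K$ stays inside the strip on the right, and you should note that a closed disk has connected complement so that it is an admissible $K$.) The implication $(1)\Rightarrow(3)$ is correctly routed through the functional limit theorem for $\zeta$, the identification of the support of the limit law as the zero-free functions together with $0$, and the portmanteau inequality for the open set $A$; RH enters exactly where you say it does, to guarantee that $\zeta$ itself lies in that support. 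What your write-up does not do is prove the two heavy inputs (the weak convergence in $H(D)$ and the Pechersky-type rearrangement underlying the support theorem); since the theorem is attributed to Bagchi and cited rather than proved in the paper, deferring those to the literature is reasonable, but you should be aware that they carry essentially all of the technical weight of the direction $(1)\Rightarrow(3)$.
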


For a further discussion of this issue, we refer to \cite{pG12}.

\section{Chain of Jordan domains}

Our main theorem is the following.

\begin{theorem}
Let $\Omega=\Omega_1\cup\Omega_2\cup\cdots\cup\Omega_n$ be a chain of
Jordan domains. That is,
$\overline\Omega_i\cap\overline\Omega_j=\emptyset$ if $|i-j|>1$ and
$\overline\Omega_i\cap\overline\Omega_j$ is a single point if
$|i-j|=1.$ Suppose $f\in A(\overline\Omega)$ and $f(z)\not=0,$ for
$z\in\Omega.$ Then, for each $\epsilon>0,$ there is a polynomial
$p_\epsilon$ such that $|f-p_\epsilon|<\epsilon$ and
$p_\epsilon(z)\not=0,$ for $z\in\overline\Omega.$
\end{theorem}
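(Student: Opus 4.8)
The plan is to reduce the theorem to producing a single function $F \in A(\overline\Omega)$ that is zero-free on \emph{all} of $\overline\Omega$ (not merely on $\Omega$) and satisfies $\|F - f\|_{\overline\Omega} < \epsilon$; once such an $F$ is in hand, the passage to a zero-free polynomial is routine. Since $\overline\Omega$ is a chain, its complement is connected, so Mergelyan's theorem applies. Moreover, a global logarithm $G$ of $F$ with $G \in A(\overline\Omega)$ exists: on each closed Jordan domain $\overline\Omega_i$ the zero-free function $F$ has a continuous logarithm $G_i$, holomorphic on $\Omega_i$, and because the nerve of the chain is a path---hence has no cycles---the additive $2\pi i$ ambiguities can be matched across the touching points $p_i = \overline\Omega_i \cap \overline\Omega_{i+1}$ to yield a single $G$ with $e^{G} = F$. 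Applying Mergelyan to $G$ gives a polynomial $q$ with $\|G - q\|$ small, whence $e^{q}$ approximates $F$. Finally $(1 + q/m)^m \to e^{q}$ uniformly on $\overline\Omega$, and for $m > \max_{\overline\Omega}|q|$ the polynomial $(1 + q/m)^m$ has no zeros on $\overline\Omega$; for $m$ large it is the desired $p_\epsilon$.

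It remains to construct $F$, which I would build one Jordan domain at a time. By Carath\'eodory's theorem the Riemann map $\phi_i \colon \mathbb{D} \to \Omega_i$ extends to a homeomorphism $\overline{\mathbb{D}} \to \overline\Omega_i$, so that $f \circ \phi_i \in A(\overline{\mathbb{D}})$ is zero-free on $\mathbb{D}$. Setting $F_i(z) = f\big(\phi_i(r\,\phi_i^{-1}(z))\big)$ for a dilation parameter $r < 1$ produces a function in $A(\overline\Omega_i)$ that is zero-free on all of $\overline\Omega_i$---every point is pushed into the interior $\Omega_i$, where $f$ does not vanish---and that converges uniformly to $f$ on $\overline\Omega_i$ as $r \to 1$, by uniform continuity of $f \circ \phi_i$ on $\overline{\mathbb{D}}$.

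The functions $F_i$ are individually zero-free approximants, but they need not agree at the touching points, so their union need not be continuous and hence need not lie in $A(\overline\Omega)$. To remedy this I would fix, at each touching point $p_i$, a common nonzero target value $w_i$ close to $f(p_i)$, and then modify each $F_i$ near the touching points on $\partial\Omega_i$ so that it attains exactly these prescribed values while remaining zero-free and uniformly close to $f$. In the disk model a localized correction by a sharply peaking function---a high power of a linear peak function at the relevant boundary point, so that the correction is negligible off a small neighborhood of $p_i$---accomplishes the matching. Since the touching points are matched to common values, the modified pieces assemble into a single $F \in A(\overline\Omega)$, zero-free on $\overline\Omega$ and within $\epsilon$ of $f$, completing the reduction.

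The main obstacle is precisely this matching step at a touching point where $f$ vanishes. There both neighboring approximants take small, nonzero values of possibly very different magnitudes, and forcing them to a common value $w_i$ threatens to create a zero near the boundary, exactly where the modified function is already small. The delicate point is to confine the correction to a neighborhood of $p_i$ so small that $f$---and hence the target value---is itself of size $O(\epsilon)$ there, so that the correction can be absorbed without destroying either the zero-free property or the uniform estimate. Making this quantitative, by balancing the sharpness of the peak function against the smallness of $f$ near $p_i$, is the heart of the argument; away from such degenerate touching points, where $f(p_i) \neq 0$, the matching is immediate, since there a multiplicative correction by a function close to $1$ suffices.
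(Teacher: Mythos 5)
Your high-level plan is the same as the paper's: reduce to producing a zero-free $F\in A(\overline\Omega)$ (the logarithm/Mergelyan/$(1+q/m)^m$ step is fine and standard), transfer to a model via the Carath\'eodory extension of the Riemann maps, push the zeros off each closed piece by a dilation, and then repair continuity at the touching points. The gap is exactly where you place it yourself: the matching step at a touching point $p_i$ with $f(p_i)=0$. You call it ``the heart of the argument'' and offer only a heuristic, and that heuristic does not work as stated. The relevant comparison is not between the size of the correction and $\epsilon$, but between the size of the correction and $m:=\min_{\overline\Omega_i}|F_i|$, which is positive but can be arbitrarily small compared with $|w_i-F_i(p_i)|$. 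At $p_i$ the additive correction is forced to have modulus $|w_i-F_i(p_i)|$, which can dwarf $|F_i|$ nearby, so ``absorbing'' it is not automatic, and shrinking the support of the peak function does not help because the correction's value at $p_i$ is prescribed. Worse, a high power of a linear peak function has image that wraps around the origin, so for $z$ near $p_i$ the correction's values sweep out essentially a full disk of radius $|w_i-F_i(p_i)|$, and nothing prevents $F_i(z)$ from landing on the negative of one of those values, creating a new zero.

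The paper's Lemma \ref{2discs} supplies the missing idea, and it is geometric rather than quantitative. One first perturbs the piece to be corrected (Lemma \ref{parabolic}) so that its value at the touching point is \emph{not collinear} with $0$ and the target value; this is possible because a nonconstant function in $A(\overline D_1)$ cannot map $\partial D_1$ into a line. The correction $\eta$ is then built so that its image is a thin lens contained in a narrow pie-shaped sector $P$ running from $0$ to the required increment; the non-collinearity guarantees that $F_i(z)+P$ misses $0$ for all $z$ in a small neighborhood of $p_i$, while a contraction parameter forces $|\eta|<m$ outside that neighborhood. Some such geometric input is indispensable, and without it your matching step is unproved. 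A secondary issue: your dilation fixes no boundary values, so each interior $\overline\Omega_i$ needs corrections at two touching points, and a peak function at one of them perturbs the value at the other; the paper sidesteps this by using corrections that vanish exactly at the opposite tangency point (the lens map of Lemma \ref{lens} and the map $\eta$) and by clearing one touching point at a time.
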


Let $D_1 = \{z: |z+1/2|<1/2\}, D_2 = \{z:|z-1/2|<1/2\}$.

We introduce three methods of approximation via three lemmas (whose
proofs are trivial). We frequently use $f^{-1}(0)$ in place of
$f^{-1}(\{0\})$.

\begin{lemma}\label{shrinking}
Let $D$ be a disk, $p \in \partial D, f\in A(\overline D)$, and
$f^{-1}(0)\subset\partial D.$ Then, for each $\epsilon>0,$ there
exists $f_\epsilon\in A(\overline D)$ with $|f-f_\epsilon|<\epsilon,
f^{-1}(0)\subset\{p\}, f_\epsilon(p)=f(p).$ We say that $f_\epsilon$
is an approximation via shrinking toward $p$.
\end{lemma}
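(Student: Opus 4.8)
The plan is to realize the ``shrinking toward $p$'' operation as an honest geometric contraction of the disk. Writing $D$ for the open disk with center $c$ and radius $R$ (so that $|p-c|=R$), I would introduce, for a parameter $r\in(0,1)$, the affine dilation centered at $p$,
\[
T_r(z)=p+r(z-p),
\]
and take $f_\epsilon:=f\circ T_r$ for $r$ sufficiently close to $1$.

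The first step is to verify the geometry of $T_r$. A short computation shows that $T_r(\overline D)$ is the closed disk of radius $rR$ whose center lies at distance $(1-r)R$ from $c$; consequently $T_r(\overline D)\subset\overline D$, and its boundary meets $\partial D$ in the single point $p=T_r(p)$. This is the one point deserving attention, and it is the crux of the lemma. Granting it, $f\circ T_r$ is a composition of $f$ with a self-map of $\overline D$ that is affine (hence holomorphic and continuous), so $f_\epsilon\in A(\overline D)$, and the normalization $f_\epsilon(p)=f(p)$ is immediate from $T_r(p)=p$.

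The second step handles the zeros: since $f^{-1}(0)\subset\partial D$, a zero of $f_\epsilon$ forces $T_r(z)\in\partial D$, and by the geometric fact just noted this can occur only when $z=p$, so $f_\epsilon^{-1}(0)\subset\{p\}$. The third step is the uniform estimate; for $z\in\overline D$ one has $|T_r(z)-z|=(1-r)|z-p|\le(1-r)\operatorname{diam}(\overline D)$, which tends to $0$ uniformly as $r\to1^-$, so uniform continuity of $f$ on the compact set $\overline D$ gives $\sup_{\overline D}|f-f\circ T_r|\to0$. Choosing $r$ near enough to $1$ then yields $|f-f_\epsilon|<\epsilon$, completing the argument; consistent with the authors' remark that the proof is trivial, no step is more than routine, the only genuinely geometric point being the tangency of the contracted disk to $\partial D$ at $p$.
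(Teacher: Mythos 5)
Your proof is correct and is essentially the paper's own argument: the authors normalize to $D=D_2$ and $p=0$, where your map $T_r(z)=p+r(z-p)$ becomes simply $z\mapsto rz$, and they likewise take $f_\epsilon(z)=f(rz)$ with $r$ near $1$. You have merely written out the routine verifications (tangency of the contracted disk at $p$, location of zeros, uniform continuity) that the paper leaves implicit.
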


\begin{proof}
We may assume $D=D_2$ and $p=0$.  Set $f_\epsilon(z)=f(rz),$ for
$0<r<1,$ and $r$ sufficiently near $1.$
\end{proof}

\begin{lemma}\label{lens}
Let $f\in A(\overline D_2), f^{-1}(0)\subset\partial D_2.$ Then, for
each $\epsilon>0,$ there exists $f_\epsilon\in A(\overline D_2)$ such
that $|f-f_\epsilon|<\epsilon, f^{-1}(0)\subset\{0,1\},
f_\epsilon(0)=f(0), f_\epsilon(1)=f(1).$
\end{lemma}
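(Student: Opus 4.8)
The plan is to obtain $f_\epsilon$ as a precomposition $f_\epsilon = f\circ\phi_\alpha$, where $\phi_\alpha$ is a ``lens map'' of $\overline{D_2}$ that fixes the two boundary points $0$ and $1$, pushes every other point of $\overline{D_2}$ strictly into the interior $D_2$, and tends to the identity as a parameter $\alpha\uparrow 1$. Such a map is exactly what the lemma wants: its fixed points preserve $f(0)$ and $f(1)$, while the strict contraction of the rest of the disk sweeps the boundary zeros of $f$ off the boundary except possibly at the two fixed points.

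To build $\phi_\alpha$ I would first pass to the right half-plane. Since $0$ and $1$ are antipodal on $\partial D_2$ (the segment $[0,1]$ is a diameter), the M\"obius transformation $T(z)=z/(1-z)$ carries $\overline{D_2}$ conformally onto the closed right half-plane $\overline H=\{w:\Re w\ge 0\}$, sending $0\mapsto 0$, $1\mapsto\infty$, the center $1/2\mapsto 1$, and the circle $\partial D_2$ onto the imaginary axis. In these coordinates the task is to squeeze $\overline H$ into the open half-plane while fixing $0$ and $\infty$, and the natural device is the principal power $\psi_\alpha(w)=w^\alpha$ with $0<\alpha<1$. As $\overline H\setminus\{0\}$ is the sector $|\arg w|\le\pi/2$ and $\psi_\alpha$ scales arguments by $\alpha$, it maps this into the strictly narrower sector $|\arg w|\le\alpha\pi/2$, hence into $H\cup\{0,\infty\}$, injectively, fixing $0$ and $\infty$. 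Setting $\phi_\alpha=T^{-1}\circ\psi_\alpha\circ T$ then gives a homeomorphism of $\overline{D_2}$ onto the lens bounded by the two circular arcs from $0$ to $1$ that are the $T^{-1}$-images of the rays $\arg w=\pm\alpha\pi/2$; it fixes $0$ and $1$, is holomorphic on $D_2$ and continuous on $\overline{D_2}$, and maps $\overline{D_2}\setminus\{0,1\}$ into $D_2$. Consequently $f_\epsilon=f\circ\phi_\alpha$ lies in $A(\overline{D_2})$, satisfies $f_\epsilon(0)=f(0)$ and $f_\epsilon(1)=f(1)$, and has $f_\epsilon^{-1}(0)\cap\overline{D_2}=\phi_\alpha^{-1}(f^{-1}(0))\subset\{0,1\}$, because $f^{-1}(0)\subset\partial D_2$ meets the closed lens $\phi_\alpha(\overline{D_2})$ only at the two cusps and $\phi_\alpha$ is injective.

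It then remains to choose $\alpha$ close enough to $1$ to force $|f-f_\epsilon|<\epsilon$. As $f$ is uniformly continuous on the compact set $\overline{D_2}$, this reduces to proving $\sup_{z\in\overline{D_2}}|\phi_\alpha(z)-z|\to 0$ as $\alpha\uparrow 1$. Away from neighborhoods of $0$ and $1$ this is immediate from the local uniform convergence $\psi_\alpha\to\mathrm{id}$. The delicate point, which I regard as the main obstacle, is uniform control near the two fixed points, where $\psi_\alpha$ has branch points and the convergence is not visibly uniform. Near $0$ I would estimate $|w^\alpha-w|$ for $|w|\le 1$ directly, splitting it into a modulus part $\bigl||w|^\alpha-|w|\bigr|$, whose supremum over $[0,1]$ tends to $0$ as $\alpha\uparrow 1$, and an argument part bounded by $(1-\alpha)(\pi/2)|w|^\alpha$. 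The behavior near $1$ follows from the symmetry $\sigma(z)=1-z$: this involution preserves $\overline{D_2}$ and corresponds under $T$ to the inversion $w\mapsto 1/w$, which commutes with $\psi_\alpha$, so $\phi_\alpha\circ\sigma=\sigma\circ\phi_\alpha$ and the deviation $|\phi_\alpha(z)-z|$ is $\sigma$-invariant; hence the estimate near $1$ mirrors that near $0$. With this uniform bound established, choosing $\alpha$ sufficiently near $1$ finishes the proof.
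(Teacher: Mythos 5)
Your proposal is correct and is essentially identical to the paper's proof: your map $\phi_\alpha=T^{-1}\circ\psi_\alpha\circ T$ with $T(z)=z/(1-z)$ is exactly the paper's lens map $L_r(z)=\bigl(\tfrac{z}{1-z}\bigr)^r\big/\bigl(1+\bigl(\tfrac{z}{1-z}\bigr)^r\bigr)$, written as a composition rather than a formula. The only difference is that you supply the uniform-convergence estimates near the cusps $0$ and $1$ (via the symmetry $z\mapsto 1-z$), which the paper leaves implicit.
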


\begin{proof}
For $0<r<1$ the mapping $L_r:D_2 \to D_2$
\[
L_r(z) =
\frac{\left(\frac{z}{1-z}\right)^r}{1+\left(\frac{z}{1-z}\right)^r}
\]
maps the disk $D_2$ onto a lens shaped region with corners at the points
$0$ and $1$ of angle $\pi r$. Here the $r$-th root is chosen so that
$1^r=1$.

Set $f_\epsilon = f\circ L_r$ with $r$ sufficiently close to $1$.
\end{proof}

\begin{lemma}\label{parabolic}
Let $f\in A(\overline D_2)$ and $\epsilon>0.$ For
$\delta=\delta(\epsilon)>0,$ set
$$ f_\epsilon(z) = f(w), \quad\mbox{where}\quad w =
\left(1-\frac{z-1}{z}-i\delta\right)^{-1}.
$$ Then, $f_\epsilon(0)=f(0)$ and for sufficiently small $\delta,$ we
have $|f_\epsilon-f|<\epsilon.$ We call such an $f_\epsilon$ an
approximation of $f$ on $\overline D_2$ parabolic at $0.$
\end{lemma}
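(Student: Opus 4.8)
The plan is to first simplify the defining formula. Writing $\frac{z-1}{z}=1-\frac{1}{z}$, the bracket collapses, $1-\frac{z-1}{z}=\frac{1}{z}$, so
$$ w=\phi(z):=\left(\frac{1}{z}-i\delta\right)^{-1}=\frac{z}{1-i\delta z}. $$
This is a M\"obius transformation whose only pole, at $z=-i/\delta$, has modulus $1/\delta\ge 2$ once $\delta\le 1/2$, hence lies outside $\overline{D_2}\subset\{|z|\le 1\}$; so $\phi$ is holomorphic on a neighbourhood of $\overline{D_2}$ and $f_\epsilon=f\circ\phi$ is a bona fide element of $A(\overline{D_2})$ provided $\phi(\overline{D_2})\subseteq\overline{D_2}$. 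Since $\phi(0)=0$, the first assertion $f_\epsilon(0)=f(0)$ is immediate.

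Next I would verify that $\phi$ is in fact an automorphism of $\overline{D_2}$, which both legitimises $f\circ\phi$ and explains the word ``parabolic.'' Let $g(z)=\frac{z-1}{z}$, a M\"obius map sending $0\mapsto\infty$ and $1\mapsto 0$; since $\partial D_2$ meets the real axis orthogonally at these two points and $g$ preserves the real axis, $g$ carries $\partial D_2$ to the imaginary axis, maps $D_2$ conformally onto the left half-plane $\{\Re\zeta<0\}$ (note $g(1/2)=-1$), and maps $\overline{D_2}$ onto $\{\Re\zeta\le 0\}\cup\{\infty\}$. A direct computation then gives the conjugation
$$ g(\phi(z))=1-\frac{1}{\phi(z)}=1-\left(\frac{1}{z}-i\delta\right)=g(z)+i\delta, $$
so in the half-plane coordinate $\phi$ is merely the vertical translation $\zeta\mapsto\zeta+i\delta$. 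A purely imaginary translation preserves $\{\Re\zeta\le 0\}$ and fixes $\infty$, whence $\phi(\overline{D_2})=\overline{D_2}$, and $\phi$ is a parabolic automorphism of $D_2$ with its unique fixed point at the boundary point $0$ (the preimage of $\infty$).

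It remains to establish the uniform estimate $|f_\epsilon-f|<\epsilon$. Here I would bound $\phi-\mathrm{id}$ directly:
$$ \phi(z)-z=\frac{z}{1-i\delta z}-z=\frac{i\delta z^2}{1-i\delta z}. $$
On $\overline{D_2}$ we have $|z|\le 1$, and for $\delta\le 1/2$ the denominator satisfies $|1-i\delta z|\ge 1-\delta\ge 1/2$, so $|\phi(z)-z|\le 2\delta$ uniformly on $\overline{D_2}$. Because $f$ is uniformly continuous on the compact set $\overline{D_2}$ and $\phi$ keeps every point inside $\overline{D_2}$, given $\epsilon>0$ I choose $\eta>0$ with $|f(a)-f(b)|<\epsilon$ whenever $a,b\in\overline{D_2}$ and $|a-b|<\eta$, and then take $\delta\le\min\{1/2,\eta/2\}$; this forces $|f_\epsilon(z)-f(z)|=|f(\phi(z))-f(z)|<\epsilon$ for all $z\in\overline{D_2}$.

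The only genuine subtlety, and the step I would guard most carefully, is confirming that $\phi$ does not push points of $\overline{D_2}$ outside the domain of $f$; the rest is a routine modulus-of-continuity argument. That verification is precisely the conjugation $g\circ\phi=g+i\delta$: once $\phi$ is recognised as a vertical translation of the left half-plane, the invariance $\phi(\overline{D_2})=\overline{D_2}$ and the parabolic fixed point at $0$ are automatic, and the uniform estimate above then completes the proof.
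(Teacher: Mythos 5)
Your proof is correct and follows the same underlying idea as the paper's deliberately terse proof: the map $w=\phi(z)$ is the identity at $\delta=0$ and moves points of $\overline D_2$ by at most $2\delta$, so uniform continuity of $f$ on the compact set $\overline D_2$ yields the estimate. Your explicit verification via the conjugation $g\circ\phi=g+i\delta$ that $\phi(\overline D_2)=\overline D_2$ --- which is needed for $f(w)$ to even be defined and for $f_\epsilon\in A(\overline D_2)$, and which also explains the name ``parabolic at $0$'' --- rigorously fills in the one point that the paper's ``it is then clear that $f_\epsilon\to f$ uniformly'' leaves implicit.
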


\begin{proof}
Notice that for $\delta=0,$ we have $f_\epsilon=f$. It is then clear
that $f_\epsilon \to f$ uniformly as $\delta \to 0$.
\end{proof}

Removing a zero at a point of contact between two disks is the most
technical part of our proof.

\begin{lemma}\label{2discs}
Let $D = D_1\cup D_2$. Let $f\in A(\overline D),
f^{-1}(0)\subset\{0,1\}.$ Then, there exists $f_\epsilon\in
A(\overline D)$ such that $|f-f_\epsilon|<\epsilon,
f_\epsilon^{-1}(0)\subset\{1\}, f_\epsilon(\pm 1)=f(\pm 1).$
 \end{lemma}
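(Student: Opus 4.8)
The plan is to eliminate the zero at the contact point $0$ by adding a tiny, explicit affine correction, after first using the lens lemma to tame $f$ at the other exceptional point $1$. First I would prepare $f$. On $\overline D_2$ I apply Lemma~\ref{lens} with $r$ close to $1$, replacing $f$ there by $\tilde f:=f\circ L_r$; this keeps $\tilde f(0)=f(0)$ and $\tilde f(1)=f(1)$, confines the zeros of $\tilde f$ on $\overline D_2$ to $\{0,1\}$, and---because the lens $L_r(\overline D_2)$ meets $\partial D_2$ only at $0$ and $1$---renders $\tilde f$ holomorphic across $\partial D_2\setminus\{0,1\}$, hence holomorphic in a slit neighbourhood of $1$. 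On $\overline D_1$ I would leave $f$ unchanged: its only possible zero there is at $0$, and keeping it fixed automatically preserves the value $f(-1)$. Since both pieces still take the value $f(0)$ at the contact point, they glue to a single $\tilde f\in A(\overline D)$ with $\|\tilde f-f\|<\epsilon/2$ and $\tilde f^{-1}(0)\subset\{0,1\}$.

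Next I would introduce the correction $h\in A(\overline D)$ defined by $h(z)=1-z$ on $\overline D_2$ and $h(z)=1+z$ on $\overline D_1$; these agree at the contact point, giving $h(0)=1$, while $h(\pm1)=0$. I then set $f_\epsilon=\tilde f+c\,h$ for a small nonzero constant $c$. By construction $f_\epsilon(0)=\tilde f(0)+c=c\neq0$, so the contact zero is gone, while $f_\epsilon(\pm1)=\tilde f(\pm1)=f(\pm1)$, and $\|f_\epsilon-f\|<\epsilon$ once $|c|$ is small. It remains to rule out new zeros. Setting $m=\min\{|\tilde f(z)|:z\in\overline D,\ |z|\ge\rho,\ |z-1|\ge\rho\}>0$ and choosing $|c|\,\|h\|<m$ leaves $f_\epsilon$ zero-free on that region; near $0$ one has $f_\epsilon\to c\neq0$ by continuity, so $f_\epsilon$ is also zero-free on a punctured neighbourhood of $0$.

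The main obstacle is the behaviour near the retained point $1$, where both $\tilde f$ and $h$ vanish (the case $f(1)\neq0$ being trivial, as then $|\tilde f|$ is bounded below there). Writing $Q=\tilde f/(1-z)$, which is holomorphic and zero-free in $D_2^o$ near $1$, I get the factorization $f_\epsilon=(1-z)\,(Q+c)$ on $\overline D_2$ near $1$, so the only possible new zeros occur where $Q=-c$. The danger is that $Q$ might oscillate through small values as $z\to1$, producing spurious zeros arbitrarily close to $1$; this is exactly the delicate point, and the reason the lens lemma is indispensable. Because $\tilde f$ is now honestly holomorphic in a slit disk at $1$ and vanishes there only as a continuous boundary value, its approach to $0$ is of power type rather than oscillatory, so $Q$ is bounded away from $-c$ for all sufficiently small $c$ of a suitable argument; if needed I would first apply the parabolic Lemma~\ref{parabolic} to normalize the zero at $1$ before adding the correction. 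Establishing this last estimate rigorously is the technical heart of the argument; granting it, $Q+c$ is zero-free near $1$, whence $f_\epsilon^{-1}(0)\subset\{1\}$, as required.
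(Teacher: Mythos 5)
There is a genuine gap, and it sits exactly where the real difficulty of this lemma lives. Your correction is $f_\epsilon=\tilde f+c\,h$ with $c$ a small \emph{constant} and $h$ a fixed function, and you rule out new zeros by splitting $\overline D$ into the region $\{|z|\ge\rho,\ |z-1|\ge\rho\}$ (where $|\tilde f|\ge m>|c|\,\|h\|$) and a neighbourhood of $0$ (where $c\,h$ dominates because $\tilde f(0)=0$). These two regimes cannot be made to meet. Since the circle $|z|=\rho$ lies in the first region, $m=m(\rho)\le\max_{|z|\le\rho}|\tilde f|$; on the other hand the continuity argument near $0$ only protects the set where $|\tilde f(z)|<|c|\,|h(z)|\le|c|$. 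So you need simultaneously $|c|<m(\rho)$ and, to cover all of $\{|z|\le\rho\}$, essentially $|c|>\max_{|z|\le\rho}|\tilde f|\ge m(\rho)$ --- impossible. In the transition zone where $|\tilde f(z)|\approx|c|$ the triangle inequality gives nothing, and the equation $\tilde f(z)=-c\,h(z)$ can genuinely have solutions there: on $\overline D_1$ you left $f$ untouched, so $\tilde f$ is a non-constant, hence open, map on $D_1$ whose image clusters at $0$, and for suitable admissible $f$ (e.g.\ one mapping $D_1$ onto a domain spiralling into $0$, so that $f(\overline D_1\cap B(0,\rho))$ contains a punctured neighbourhood of $0$ for every $\rho$) \emph{every} sufficiently small $c$ is of the form $-\tilde f(z)/h(z)$, and no constant correction works. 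This is precisely the phenomenon the paper's proof is built to defeat: it first makes the $D_1$-piece zero-free on all of $\overline D_1$ by Lemma~\ref{shrinking} (shrinking toward $-1$, legitimate since $f(-1)\ne0$), accepts that the two pieces then disagree at $0$, and repairs the mismatch with a correction $\eta$ that is not a constant multiple of a fixed function but a lens whose image near $0$ is confined to a thin pie-piece pointed in a direction chosen --- via the parabolic perturbation of Lemma~\ref{parabolic} --- so that $h_\epsilon(z)+P$ avoids $0$. Direction control, not size control, closes the argument, and your proposal has no substitute for it.

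The second gap is the one you flag yourself, at the point $1$. Because your correction must vanish at $\pm1$ to preserve $f(\pm1)$, you are forced to show that $Q=\tilde f/(1-z)$ omits $-c$ near $1$. The claim that composing with the lens map makes the vanishing of $\tilde f$ at $1$ ``of power type rather than oscillatory'' is unjustified: near $1$, $\tilde f$ is just $f$ evaluated along a nontangential approach region inside $D_2$, where $f$ is merely continuous at $1$ and may tend to $0$ arbitrarily slowly and with spiralling argument, so $Q$ can pass through every small value infinitely often as $z\to1$. The paper never faces this problem, because its correction lives only on $\overline D_1$ (which does not contain $+1$), while on $\overline D_2$ the function is simply the shrink of $f_2$ toward $+1$ and is not modified further. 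I would recommend restructuring along those lines rather than trying to patch the constant-correction scheme.
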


\begin{proof}
If $f(0)\not=0,$ there is nothing to prove. Set $f_\epsilon=f.$

Suppose $f(0)=0.$ Then, $f$ is constant on neither $D_1$ nor $D_2,$
for otherwise $f$ would have interior zeros, contrary to the
hypothesis. Set $f_j=f\mid\overline D_j, j=1,2.$ As in the proof of
Lemma \ref{shrinking}, let $g_1$ be an approximation of $f_1$ by
shrinking $\overline D_1$ towards $-1$ and let $g_2$ be an
approximation of $f_2$ by shrinking $\overline D_2$ towards $+1.$ We
choose the approximation so that $|f_j-g_j|<\epsilon/3$ on $\overline
D_j.$ We note that $g_j\in A(\overline D_j), \, g_1^{-1}(0)=\emptyset,
\, g_2^{-1}(0)\subset\{+1\}, \, g_1(-1)=f_1(-1), \, g_2(+1)=f_2(+1).$
If $g_1(0)=g_2(0),$ we may set $f_\epsilon=g_j$ on $\overline D_j,$
for $j=1,2$ and the proof is complete.

Suppose $g_1(0)\not=g_2(0).$ Let $h_\epsilon$ be an approximation of
$g_1$ on $\overline D_1$ parabolic at $-1$ in the sense of Lemma
\ref{parabolic}. Note that $h_\epsilon(\overline D_1)=g_1(\overline D_1),$ so $h_\epsilon$ omits zero on $\overline D_1.$
We claim that there are arbitrarily close such
approximations such that $h_\epsilon(0)$ is not on the line determined
by $0$ and $g_2(0)$.  If not, it follows from the construction of
$h_\epsilon$ that $g_1(z)$ is on this line, for all $z\in\partial D_1$
near $0.$ Therefore, $g_1(\partial D_1)$ is in this line. This can be
proved via conformal mapping using the fact that a function analytic
on a neighborhood of the closed upper half plane and real valued on an
interval of the real line must be real valued on the entire real line.
Consequently, $g_1(\overline D_1)$ is also in this line. But $g_1$ is
non-constant and hence open on $D_1,$ which is a contradiction. Thus,
we may choose $h_\epsilon$ such that $|h_\epsilon-g_1|<\epsilon/3$ on
$\overline D_1$ and $h_\epsilon(0)$ is not on the line determined by
$0$ and $g_2(0).$

Note that $|g_2(0)-h_{\epsilon}(0)| \leq |g_2(0)-f(0)|+ |f(0)-g_1(0)| +
|g_1(0)-h_{\epsilon}(0)| < \epsilon$.  Let us write
$g_2(0)-h_\epsilon(0)$ in polar coordinates:
\[
 g_2(0)-h_\epsilon(0) = re^{i\alpha},
\]
 where $r<\epsilon.$ We note that $0$ is not on the line segment
\[
 h_\epsilon(0)+te^{i\alpha}, \quad 0\le t\le r
\]
by choice of $h_\epsilon$.
Consider the pie piece:
\[ P = \{te^{i(\alpha+\varphi)}:  0\le t\le r,
|\varphi|\le\delta_1\}.
\]
Choose $\delta_1>0$ so small that $0$ is not on the translated
closed pie piece given by
\[ h_\epsilon(0)+ P.
\]
 By the continuity of $h_\epsilon,$ there is a $\delta_2>0$ such
that $0$ is not in the set
\begin{equation}\label{zero-free-pie}
    h_\epsilon(z)+P, \quad |z|\le\delta_2, \quad z\in\overline D_1.
\end{equation}

Let us define a mapping $w=\eta(z)$ on $\overline D_1$ via a series of
transformations
\[
\begin{aligned}
z&\mapsto z_1 = -\frac{z+1}{z},  &(D_1 \to RHP:=\text{right
  half plane})&\\
 z_1&\mapsto z_2 = z_1^{2\delta_1/\pi}, \quad z_2(1)=1,
&(RHP \to
\text{ sector with angle } 2\delta_1)&\\
z_2&\mapsto z_3 = \delta_3z_2, \quad \delta_3>0, &\text{(contraction
  of the sector)}&\\
z_3&\mapsto z_4 = r\frac{z_3}{z_3+1}, &(\text{sector} \to
\text{lens})&\\
z_4&\mapsto w = e^{i\alpha}z_4. &\text{(rotation of the lens)}&
\end{aligned}
\]
Thus, $\eta \in A(\bar{D}_1)$ maps $\overline D_1$ to a ``lens" of
angular opening $2\delta_1,$ whose end points are $\eta(-1)=0$ and
$\eta(0) = re^{i\alpha}.$ The parameter $\delta_3$ will be chosen
momentarily.

Define $f_\epsilon(z)=h_\epsilon(z)+\eta(z),$ for $z\in\overline D_1$
and $f_\epsilon(z)=g_2(z),$ for $z\in\overline D_2.$ Then,
$f_\epsilon(z)\in A(\overline D)$ since $\eta(0) =
g_2(0)-h_{\epsilon}(0)$.  Also, $|f-f_\epsilon|<2\epsilon.$ On $D_2$ this is because $f_\epsilon =g_2.$ On $D_1,$ this follows since
$|f-g_1|,|g_1-h_{\epsilon}|<\epsilon/3$ and $|\eta|<\epsilon$.  Since
$\epsilon$ is an arbitrary positive number, there remains only to show
that $f_\epsilon^{-1}(0)\subset\{+1\}$ and since we already know that
$g_2^{-1}(0)\subset\{+1\},$ it is sufficient to show that
$f_\epsilon(z)\ne 0$ for $z \in \overline D_1$.  We break into cases
$|z| \geq \delta_2$ and $|z| \leq \delta_2$.

Let $m=\min\{|h_\epsilon(z)|:z\in\overline D_1\}.$ We now choose
$\delta_3$ so small that $\eta$ maps the region $\{z \in \overline
D_1: |z|\geq \delta_2\}$ into the set $\{w: |w|< m\}$.
%%  let $\pm p$ be the
%% two points $(|z|=\delta_2)\cap\partial D_1$ and let
%% $\zeta_\pm=\eta(\pm p).$ We now choose $\delta_3$ so small that
%% $|\zeta_\pm|<m.$
Then, for $z \in \overline D_1, |z|\geq \delta_2$,
\[
 |f_\epsilon(z)| \ge |h_\epsilon(z)|-|\eta(z)| >m-m = 0.
\]

%% To show $0\not\in f_\epsilon(\overline D_1),$ we consider first the
%% case that $z\in \overline D_1\cap\{|z|\ge\delta_2\}.$ Since $\eta$
%% maps the circular arc $\overline D_2\cap(|z|=\delta_2)$ to the
%% circular arc $|\eta(z)|=|\zeta_\pm|,$ we have

Now, suppose $z\in\overline D_1, |z|\leq\delta_2.$ Then
$$ f_\epsilon(z) = h_\epsilon(z)+\eta(z) =
h_\epsilon(z)+te^{i(\alpha+\varphi)},
$$ with $|\varphi|\le\delta_1$ and $t\le r,$ because the lens
$\eta(\overline D_1)$ lies in the pie piece $|w|\le r, |\arg
w-\alpha|\le\delta_1.$ Thus, by \eqref{zero-free-pie},
$f_\epsilon(z)\not=0.$

We have shown that $f_\epsilon^{-1}(0)\subset\{+1\}.$ Since
$\eta(-1)=0,$ we also have that $f_\epsilon(-1)=f(-1)$ and moreover
$f_\epsilon(+1)=g_2(+1)=f(+1).$ This concludes the proof.
\end{proof}

\begin{lemma}\label{disc-chain}
Let $D=D_1\cup D_2\cup\cdots\cup D_n,$ where the $D_j$ are discs of
radius $1/2$ whose respective centers are the points
$1/2,3/2,\cdots,(2n-1)/2$ and whose points of tangency are
$1,2,\cdots,n-1.$ Suppose $f\in A(\overline D)$ and $f(z)\not=0,$ for
$z\in D.$ Then, for each $\epsilon>0,$ there is an $f_\epsilon\in
A(\overline D)$ such that $|f-f_\epsilon|<\epsilon$ and
$f_\epsilon(z)\not=0,$ for $z\in\overline D.$
\end{lemma}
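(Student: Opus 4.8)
The plan is to remove all boundary zeros of $f$ in three stages, reducing the general situation to repeated applications of Lemma~\ref{2discs}. Since $f$ is nonvanishing on the open set $D$, every zero of $f$ on $\overline D$ lies on $\partial D$, and on each disk $\overline{D_j}$ the set $f^{-1}(0)\cap\overline{D_j}$ is contained in $\partial D_j$. All three preliminary lemmas are affine invariant: each $D_j$ is a translate of $D_2$, and each tangent pair $D_j\cup D_{j+1}$ is a translate of the pair appearing in Lemma~\ref{2discs}, so I may transport them freely to any disk or consecutive pair of the chain. Throughout I refer to the points $0,1,\dots,n$ as the \emph{nodes} of the chain, where $0$ and $n$ are the two outer endpoints and $1,\dots,n-1$ are the points of tangency; the endpoints of the diameter of $D_j$ are the nodes $j-1$ and $j$.

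First I would apply the lens construction of Lemma~\ref{lens} to $f$ on each disk $\overline{D_j}$ separately. This replaces $f|_{\overline{D_j}}$ by a uniformly close $g_j\in A(\overline{D_j})$ whose only possible zeros lie at the two diameter endpoints $j-1$ and $j$, and which satisfies $g_j(j-1)=f(j-1)$ and $g_j(j)=f(j)$. Because the values at the shared nodes are preserved, the $g_j$ patch together into a single function in $A(\overline D)$, uniformly close to $f$, whose zeros now lie only among the nodes $\{0,1,\dots,n\}$; I relabel this function $f$. Next I would clear the two outer endpoints: shrinking $D_1$ toward the node $1$ and $D_n$ toward the node $n-1$ in the sense of Lemma~\ref{shrinking} removes any zero at $0$ and at $n$ while fixing the values at the nodes $1$ and $n-1$, hence preserving the gluing with $D_2$ and $D_{n-1}$. (When $n=1$ there are no tangency nodes and this step degenerates: one simply shrinks $D_1$ toward any boundary point where $f$ does not vanish.) After these two stages the zeros of $f$ lie only at the tangency nodes $1,2,\dots,n-1$.

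The final stage removes the tangency zeros one at a time, from left to right, by applying Lemma~\ref{2discs} to the consecutive pairs $D_1\cup D_2,\ D_2\cup D_3,\dots,D_{n-1}\cup D_n$ in this order. In the notation of Lemma~\ref{2discs}, the pair $D_j\cup D_{j+1}$ plays the role of $D_1\cup D_2$, the tangency node $j$ plays the role of the contact point $0$, the left node $j-1$ plays the role of $-1$, and the right node $j+1$ plays the role of $+1$. The hypothesis of that lemma asks that there be no zero at the left node $j-1$ and a zero only possibly at $j$ and at the right node $j+1$; this is exactly the configuration I have arranged, since node $0$ was cleared in the previous stage and, inductively, each earlier application clears node $j-1$ before I reach the pair $D_j\cup D_{j+1}$. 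Each application removes the zero at $j$, leaves a possible zero only at $j+1$, and fixes the values at $j-1$ and $j+1$; thus it modifies $f$ only on $\overline{D_j\cup D_{j+1}}$ and preserves continuity across the neighboring contact nodes $j-1$ and $j+1$. After processing node $n-1$ the only node that could still carry a zero is $n$, but that node was cleared in the second stage and its value is preserved by every application, so the resulting $f_\epsilon\in A(\overline D)$ has no zeros on $\overline D$.

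Since there are only finitely many stages—one lens step, one shrinking step, and $n-1$ applications of Lemma~\ref{2discs}—I would allocate to each a portion of the total error budget so that the accumulated perturbation stays below $\epsilon$. I expect the main obstacle to be the bookkeeping in the last stage: one must verify that at the moment Lemma~\ref{2discs} is invoked on $D_j\cup D_{j+1}$ its hypotheses genuinely hold, in particular that the left node $j-1$ is already zero-free and that the value preserved at the right node $j+1$ keeps the zero status consistent with the still-unprocessed part of the chain. Once the left-to-right ordering and the value preservation at the contact nodes are in place, each individual step is precisely Lemma~\ref{2discs}, and checking that the glued function remains in $A(\overline D)$ is routine.
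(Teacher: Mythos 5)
Your proposal is correct and follows essentially the same three-stage strategy as the paper's own proof: clear the two outer nodes by shrinking (Lemma~\ref{shrinking}), push the remaining boundary zeros to the tangency nodes by the lens construction (Lemma~\ref{lens}), and then sweep left to right with Lemma~\ref{2discs}. The only cosmetic difference is that you apply the lens step to every disk before shrinking the two end disks, whereas the paper shrinks the end disks first and applies the lens only to the interior ones; the resulting configuration and induction are identical.
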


\begin{proof}
Set $f_j=f\mid\overline D_j.$ By Lemma \ref{shrinking}, we may assume
that $f_1^{-1}(0)\subset\{1\}$ (by ``shrinking toward 1'') and
$f_n^{-1}(0)\subset\{n-1\}$ (by ``shrinking toward $n-1$'').

By Lemma \ref{lens}, we may assume that, for $j=2,3,\cdots,n-1,$ we
have $f_j^{-1}(0)\subset\{j-1,j\}.$

Now, we proceed by finite induction to eliminate the only possible
remaining zeros $1,2,\cdots,n-1.$ Applying Lemma \ref{2discs} to
$D_1\cup D_2,$ we may get rid of the the possible zero $1.$ Then,
applying Lemma \ref{2discs} to $D_2\cup D_3,$ we get rid of the
possible zero $2.$ After $n-1$ steps, we have eliminated all possible
zeros. This concludes the proof of the lemma.
\end{proof}

\begin{proof}[Proof of Theorem]
It is sufficient to approximate $f$ uniformly by a function
$f_\epsilon\in A(\overline\Omega)$ such that $f_\epsilon(z)\not=0,$ for
$z\in\overline\Omega,$ since such an $f_\epsilon$ can in turn be
uniformly approximated by polynomials which are zero-free on
$\overline\Omega$ by Mergelyan's theorem.

For each $j=1,2,\cdots,n,$ let $\phi_j(w)=z$ be a conformal mapping of
the disc $D_j$ from the previous lemma onto the Jordan domain
$\Omega_j.$ By the Osgood-Carath\'eodory Theorem, $\phi_j$ extends to
a homeomorphism of $\overline D_j$ onto $\overline\Omega_j$ and we may
assume that $\phi_j$ maps the points of tangency of $D_j$ with
neighboring discs to the points of tangency of $\Omega_j$ with
neighboring Jordan domains. Let $\phi$ be the map from $\overline D$
to $\overline\Omega,$ defined by setting $\phi=\phi_j$ on $\overline
D_j.$ Setting $g=f\circ\phi,$ we have $g\in A(\overline D)$ and
$g(w)\not=0$ for $w\in D.$ By Lemma \ref{disc-chain}, there is a
$g_\epsilon\in A(\overline D)$ such that $|g-g_\epsilon|<\epsilon$ and
$g_\epsilon(w)\not=0,$ for $w\in\overline D.$ We may set
$f_\epsilon(z)=g(\phi^{-1}(z))=g(w).$
\end{proof}

{\bf Acknowledgement.} This paper originated in the workshop:
Stability, hyperbolicity, and zero localization of functions, December
5-9, 2011, which was held at the American Institute of Mathematics
(AIM) and which was organized by Petter Branden, George Csordas, Olga
Holtz, and Mikhail Tyaglov. Sergei Kruschev took part in the initial
discussions of the present paper. We thank all of these people as well
as the staff of AIM.

\end{document}